\documentclass{amsart}
\usepackage{enumitem}
\usepackage{hyperref}
%\usepackage{graphicx}
%  \graphicspath{{./fig/}}

% Mathematical packages
%\usepackage{mathtools} % coloneqq, psmallmatrix, etc

% -------------------------------------------------------------------------
% Commands definition

\newtheorem{theo}{Theorem}
\newtheorem{ques}[theo]{Question}

\newtheorem{coro}[theo]{Corollary}

\newtheorem{Rema}[theo]{Remark}
\theoremstyle{remark}

\newcommand{\Z}{\mathbb{Z}}

\newcommand{\SL}{\mathrm{SL}}

\newcommand{\Sp}{\mathrm{Sp}}
\newcommand{\hol}{\operatorname{hol}}

%\newcommand{\KZ}{\operatorname{KZ}}

% -------------------------------------------------------------------------
\begin{document}
% -------------------------------------------------------------------------
\title{A remark on $\mathbb{Z}^d$-covers of Veech surfaces}
\author{Angel Pardo}
%\email{angel.pardo.j@gmail.com}

% -------------------------------------------------------------------------
% Abstract
\begin{abstract}
In this note we are interested in the dynamics of the linear flow on infinite periodic $\mathbb{Z}^d$-covers of Veech surfaces.
An elementary remark allows us to show that the kernel of some natural representations of the Veech group acting on homology is ``big''. In particular, the same is true for the Veech group of the infinite surface, answering a question of Pascal Hubert.
We give some applications to the dynamics on wind-tree models where the underlying compact translation surface is a Veech surface.
\end{abstract}

% -------------------------------------------------------------------------
\maketitle

\section{Introduction}

Let $X$ be a Veech surface, that is, a translation surface whose Veech group $\Gamma$ is a lattice, or more generally, let us just assume that $\Gamma$ is non-elementary.
Let $\Sigma$ be the finite set of singularity points (and, possibly, marked points) of $X$.
Since the intersection form $\langle\cdot,\cdot\rangle$ is non-degenerate between $H^1(S\setminus\Sigma,\Z)$ and $H^1(S,\Sigma,\Z)$, every connected $\Z^d$-cover is defined by a $d$-tuple of primitive, linearly independent elements $\mathbf{f}=(f_1,\dots,f_d)$ in the group of relative cohomology $H^1(S,\Sigma,\Z)$. 

For simplicity, we restrict ourselves to the case when $\mathbf{f}$ is an absolute covector, that is, it is a $d$-tuple of independent elements in the group of \emph{absolute} cohomology $H^1(S,\Z)$, and denote by $X_\mathbf{f}$ the corresponding $\Z^d$-periodic (connected) translation surface. (In this case, $X_\mathbf{f}$ is actually a surface.)

\subsection*{Veech group representation}
For $g\in\Gamma$, the Kontsevich--Zorich cocycle (or the Torelli map for the corresponding affine automorphisms) defines a symplectic map on $H^1(X)$ which preserves $H^1(X,\Z)$. Thus, it defines a representation $\rho_{H^1}$ of $\Gamma$ on the symplectic group $\Sp(H^1(X,\Z))$.

If $F$ is a subspace of $H^1(X)$ which is invariant under this action, the restriction to $F$ gives another representation $\rho_{F}: \Gamma \to \SL(F)$, which, in general, is not faithful. Furthermore, this representation is neither symplectic nor defined over $\Z$ in general. However, if the subspace $F$ is symplectic or defined over $\Z$, so the representation $\rho_{F}$ is.

(For the sake of having an invariant subspace sometimes we could want to get rid of finite order elements $\Gamma$. Since this (virtually) does not make any difference, we always assume that $\Gamma$ has not finite order (elliptic) elements.)
%In particular, $-id\notin \Gamma$ and hence, every representation $\rho_F$ descends to a representation of $\PSL(X)$ on $\PSL(F)$.)

\subsection*{Integer invariant subspaces}
We can always assume that for $\mathbf{f}=(f_1,\dots,f_d)$ there are integer invariant subspaces $F^{(j)} \subset H^1(X)$, irreducible over $\Z$, such that $f_i\in F=\oplus_j F^{(j)}$, $i=1,\dots,d$. Note that, in general, one could have $F^{(j)}=H^1(X)$.

However, if one imposes the zero-drift condition, that is, that $\hol(f_i) = 0$, then $F\subset \ker\hol$, which is a codimension $2$ subspace in $H^1(X)$.
The zero-drift condition is a necessary condition for recurrence of the linear flow on $X_\mathbf{f}$ on almost every direction and it is also sufficient in the particular case of $d=1$.
Thus, it is a natural condition to consider when interested in dynamics of the linear flow on $X_\mathbf{f}$ and from now on, we assume that the zero drift condition holds, that is, $\hol(f_i) = 0$, $i=1,\dots,d$; and we fix $F^{(j)},F\subset \ker\hol$, the integer invariant subspaces as above.

%Note that the zero-drift condition imposes $d \leq \dim\spa(\ker\hol \cap H^1(X,\Z))\in\{0,\dots,2g-2\}$.

The most important objects in this note are the subgroups $K_j=\ker \rho_{F^{(j)}}<\Gamma$.
Since $K_j$ and $\Gamma$ are Fuchsian groups and $K_j\lhd\Gamma$, their limit sets coincide unless $K_j$ is trivial (this is a result of Matsuzaki--Taniguchi~1998; see, e.g., \cite[Lemma 5.4]{HoW}). %[find the exact result on reference [14: Matsuzaki--Taniguchi~1998] of \cite{HoW}]).
Based on Thurston ideas, Hooper--Weiss~\cite[Theorem 5.5]{HoW} proved that when $\Gamma$ is a lattice and $F^{(j)}\subset \ker\hol$ is a $2$-dimensional integer subspace, the representation $\rho_{F^{(j)}}$ is never faithful and thus, $K_j$ non-trivial.
To our knowledge, it is not known whether a representation given by a higher dimensional integer invariant space could be faithful or not. We do not aim to treat this deeper problem here.
However, after this note, this question becomes central to the study of $\Z^d$-covers of Veech surfaces, see Question~\ref{ques:question} below.

We are interested in the case when all $K_j$'s are non-trivial. 
More precisely, we are interested in the group $K=\ker \rho_F=\cap_j K_j$.
Observe that if $F$ contains the tautological plane (generated by the Poincar\'e dual of $\Re\omega$ and $\Im\omega$), the representation $\rho_F$ is always faithful and thus $K$ trivial. This is another reason to assume zero-drift.
%[Maybe, say something when this is not the case, and the representation is faithful]

\section{An elementary remark}
The main tool in this note is given by the following elementary remark.

\begin{Rema}\label{rema:commutators}
%Let $\psi_i:H\to H_i$ be group homomorphisms and denote by $k_i$ the kernel of $\psi_i$, $i=1,2$. Then $[k_1,k_2]\lhd k_1\cap k_2$.
Let $A,B$ be two normal subgroups of $G$. Then $[A,B]\subset A\cap B$.
\end{Rema}
%\begin{proof}[Proof (to be deleted)]
%Let $a\in k_1$ and $b\in k_2$. Then
%\[ \psi_1([a,b])=\psi_1(aba^{-1}b^{-1})=\psi_1(a)\psi_1(b)\psi_1(a)^{-1}\psi_1(b)^{-1}=\psi_1(b)\psi_1(b)^{-1}=1_H.\]
%Let $a\in \ker\phi$ and $b\in \ker\varphi$. Then
%\[ \psi([a,b])=\psi(aba^{-1}b^{-1})=\psi(a)\psi(b)\psi(a)^{-1}\psi(b)^{-1}=\psi(b)\psi(b)^{-1}=id.\]
%\end{proof}

And the following concequence.

\begin{theo}\label{theo:theorem}
Suppose that $\Gamma$ is of the first-kind (resp. second-kind) and that all $K_j$'s are non-trivial. Then $K=\ker \rho_F=\cap_{j=1}^n K_j$ is of the first-kind (resp. second-kind).
In particular, the Veech group of $X_\mathbf f$ is of the first-kind (resp. second-kind).
\end{theo}
\begin{proof}
First of all, note that the last conclusion follows from the former since $K$ is a subgroup of the Veech group of $X_\mathbf f$. % (\cite[Lemma 5.4]{HoW}).

Since $K_j$ is a non-trivial normal subgroup of $\Gamma$, the limit set of both groups coincide (\cite[Lemma 5.4]{HoW}).
If $n=1$, we are done. Suppose $n\geq 2$, let $H_1=K_1$ and $H_j=\left\langle[H_{j-1},K_j]\right\rangle_\mathrm{N}$ for $j=2,\dots,n$, where $\left\langle\cdot\right\rangle_\mathrm{N}$ denotes the normal closure.
By Remark~\ref{rema:commutators}, it follows that $H_n\lhd K$. It suffices then to show that $H_n$ is non-trivial, since $K$ is a normal subgroup of $\Gamma$ and in such case, the limit set of both groups would coincide.

By induction:
\begin{itemize}[leftmargin=*]
\item By definition, $H_1=K_1 \lhd \Gamma$ and by hypothesis, it is non-trivial.
\item As before, $K_n$ is a non-trivial normal subgroup of $\Gamma$, and by the induction hypothesis, the same is true for $H_{n-1}$. Then both (as any non-trivial normal subgroup of $\Gamma$) are of the first kind.
It follows that $H_n=\left\langle[H_{n-1},K_n]\right\rangle_\mathrm{N}$ is non-trivial. 
In fact, take any infinite order element $h\in H_{n-1}$ and any infinite order element $k\in K_n$ whose fixed points in the boundary do not coincide (note that this is possible as soon as one of the groups is non-elementary or both are, but they belong to different one-parameter subgroups). It follows that $h$ and $k$ do not commute, that is $id\neq [h,k] \in H_n$.\qedhere
\end{itemize}
\end{proof}

Combining this with \cite[Theorem 5.6]{HoW}, the following is a direct consequence.

\begin{coro} \label{coro:2d}
Suppose that $X$ is a Veech surface %, that is, that $\Gamma$ is a lattice group
and that all the $F^{(j)}$ are $2$-dimensional. Then $K=\cap_j K_j$ is of the first-kind.
In particular, the Veech group of $X_\mathbf f$ is of the first-kind.
\end{coro}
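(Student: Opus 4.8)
The plan is to check that both standing hypotheses of Theorem~\ref{theo:theorem}, taken with the first-kind alternative, hold in this setting, and then to invoke that theorem directly. Since $X$ is a Veech surface its Veech group $\Gamma$ is by definition a lattice in $\pslr$, and every lattice is a Fuchsian group of the first-kind: its limit set is all of $\partial\H$. This gives the first hypothesis of Theorem~\ref{theo:theorem} for free.

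Next I would verify that every $K_j=\ker\rho_{F^{(j)}}$ is non-trivial. Here the zero-drift convention adopted in the introduction remains in force, so each $F^{(j)}$ is an integer invariant subspace contained in $\ker\hol$, and by the hypothesis of the corollary each is $2$-dimensional. This is exactly the situation to which \cite[Theorem 5.6]{HoW} applies (the Veech-surface form of the non-faithfulness result recalled in the introduction as \cite[Theorem 5.5]{HoW}): for such an $F^{(j)}$ the representation $\rho_{F^{(j)}}$ is not faithful, so $K_j\neq\{id\}$ for every $j$.

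With both hypotheses in place, Theorem~\ref{theo:theorem} applies and yields that $K=\cap_j K_j$ is of the first-kind. For the final assertion I would note that $K$ is contained in the Veech group of $X_\mathbf{f}$ by construction; since the limit set of $K$ already fills $\partial\H$ and the limit set of a group contains that of any subgroup, the Veech group of $X_\mathbf{f}$ is of the first-kind as well. The main—and essentially only—point requiring attention is confirming the hypotheses of \cite[Theorem 5.6]{HoW}, namely that the fixed subspaces $F^{(j)}$ genuinely lie in $\ker\hol$ and are exactly $2$-dimensional; once the non-triviality of the $K_j$ is secured the conclusion is immediate from Theorem~\ref{theo:theorem}, which is precisely why this is stated as a corollary rather than a theorem.
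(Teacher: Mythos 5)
Your proposal is correct and follows exactly the paper's route: the corollary is obtained by combining the Hooper--Weiss non-faithfulness result for $2$-dimensional integer invariant subspaces of $\ker\hol$ (giving non-triviality of each $K_j$) with Theorem~\ref{theo:theorem}, using that a Veech surface's Veech group is a lattice and hence of the first kind. The paper states this as a one-line consequence, and your verification of the hypotheses matches its intent.
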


The previous result relies on Thurston ideas and does not extend straightforwardly to the higher dimensional case.
Thus, by Theorem~\ref{theo:theorem}, the following question is central to the study of $\Z^d$-covers of Veech surfaces.

\begin{ques}\label{ques:question}
Under which conditions on a irreducible $\Gamma$-invariant integer subspace $F\subset H^1(X)$, the group $K=\ker\rho_F$ is non-trivial?
Does the zero-drift condition, that is, $F\subset \ker\hol$, suffices?
\end{ques}

\section{Applications to wind-tree models}
Periodic wind-tree models ---both the classical model and the Delecroix--Zorich variant--- yields $\Z^2$-periodic translation surfaces defined by cocycles lying in $2$-dimensional subspaces (see, e.g., \cite{DHL,DZ,P}).
It follows, by Corollary~\ref{coro:2d}, that when the underlying compact surface is a Veech surface, the Veech group of the infinite surface is of the first kind.

\subsection{Diffusion rates}
Since, for non-elementary groups, the limit set coincides with the closure of the set of fixed points of hyperbolic elements, it follows that there is a dense set of directions in which the polynomial diffusion rate is zero.
Thus, the ideas of Crovisier--Hubert--Lanneau~\cite{CHL} can be applied to show the following.

\begin{theo}
Let $\Pi$ be a wind-tree model (including the Delecroix--Zorich variant) such that the underlying compact translation surface is a Veech surface. Then, the set of polynomial diffusion rates is the whole interval $[0,1[$.
\end{theo}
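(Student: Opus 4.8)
The plan is to derive the claimed result (the set of polynomial diffusion rates equals $[0,1[$) by combining the structural theorem just proved about limit sets with the established machinery of Crovisier--Hubert--Lanneau~\cite{CHL}. The key output of the present note, specialized through Corollary~\ref{coro:2d}, is that the Veech group $\Gamma_\mathbf{f}$ of the infinite surface $X_\mathbf{f}$ is of the first kind, meaning its limit set is all of $\partial\H$. For a non-elementary Fuchsian group the limit set coincides with the closure of the set of fixed points of hyperbolic elements, so first-kind-ness gives us a \emph{dense} set of hyperbolic fixed directions on the circle of directions. In each such direction the linear flow on $X_\mathbf{f}$ is (up to the $\Z^2$-action) governed by a closed geodesic in the quotient, which forces the transverse cocycle growth to be sub-polynomial; concretely, these are exactly the directions where the polynomial diffusion rate vanishes.

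First I would recall the precise definition of the polynomial diffusion rate in direction $\theta$, namely the exponent $\delta(\theta)$ controlling the growth $\operatorname{dist}(\varphi_t^\theta x, x) \sim t^{\delta(\theta)}$ of the linear flow on the $\Z^2$-cover, and cite the foundational result that $\delta$ takes values in $[0,1]$ (indeed in $[0,1[$ generically, since rate $1$ is ballistic and excluded under zero-drift). Next I would invoke the \cite{CHL} framework, whose central theorem expresses $\delta(\theta)$ in terms of the top Lyapunov behavior of the Kontsevich--Zorich cocycle restricted to the invariant subspace $F$ along the $\SL(2,\R)$-orbit in direction $\theta$; the point is that $\delta(\theta)$ is determined by the Fuchsian-group dynamics in direction $\theta$, and it is an upper semicontinuous (in fact, in the relevant sense, surjective-on-intervals) function once one knows the limit set is large.

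The heart of the argument is then the interplay of two facts: on the one hand, the \textbf{dense set of hyperbolic directions} forces $\delta = 0$ on a dense set; on the other hand, there must exist directions realizing diffusion rates arbitrarily close to $1$, coming from the generic (parabolic-free, Lyapunov-exponent-positive) directions in the first-kind limit set. Since $\delta$ varies in a controlled way between these extremes, one concludes---via the intermediate-value-type argument that is the technical core of \cite{CHL}---that every value in $[0,1[$ is attained. I would carry this out by reducing the statement to showing that the closure of $\{\delta(\theta) : \theta \in S^1\}$ is a full interval, then using the density of zero-rate directions to pin the left endpoint at $0$ and the first-kind structure to fill out the interior up to (but not including) $1$.

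\emph{The hard part} will be verifying that the hypotheses of the Crovisier--Hubert--Lanneau machinery genuinely apply in the present generality---in particular, that the passage from the classical wind-tree model (where \cite{CHL} was developed) to an arbitrary Veech-surface base and to the Delecroix--Zorich variant does not require the invariant subspace $F$ to have any special dimension or arithmetic beyond what Corollary~\ref{coro:2d} supplies. The subtle point is that \cite{CHL} ties the diffusion rate to Lyapunov exponents of the cocycle on $F$, and one must check that the first-kind property of $\Gamma_\mathbf{f}$ (rather than any finer spectral information) is exactly what is needed to realize the full range of rates; once the abstract input ``limit set is everything, hence zero-rate directions are dense'' is matched to the input that \cite{CHL} actually consumes, the conclusion is immediate.
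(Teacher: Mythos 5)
Your overall architecture is the same as the paper's: use the output of Corollary~\ref{coro:2d} (hyperbolic elements in the kernel of $\rho_F$, hence a dense set of directions with zero diffusion rate) to put $0$ in the set of rates $\Delta_\Pi$, and then invoke the Crovisier--Hubert--Lanneau machinery to upgrade this to the full interval via a connectedness argument. The paper's proof is exactly this: $\Delta_\Pi$ is connected and contains $[\delta_\Pi,1[$ by \cite{CHL}, contains $0$ by Corollary~\ref{coro:2d}, hence contains $[0,1[$.

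There is, however, one step in your sketch that is misattributed and would fail if carried out as written: you claim that rates ``arbitrarily close to $1$'' come from the \emph{generic} (Lyapunov-exponent-positive) directions. They do not. By \cite[Theorem~1]{DHL} and \cite[Theorem~1.5]{P} --- results you need but do not cite --- the generic direction realizes a single constant rate $\delta_\Pi$ (equal to $2/3$ for the classical model), strictly less than $1$. The rates in $[\delta_\Pi,1[$ are produced by \cite{CHL} from carefully constructed \emph{non-generic} directions, and it is also \cite{CHL}, not an abstract semicontinuity or surjectivity property of $\delta(\theta)$, that supplies the connectedness of $\Delta_\Pi$. So the correct division of labor is: \cite{DHL,P} give the positive generic rate $\delta_\Pi$; \cite{CHL} gives connectedness of $\Delta_\Pi$ and $[\delta_\Pi,1[\subset\Delta_\Pi$; the present note's Corollary~\ref{coro:2d} gives $0\in\Delta_\Pi$; and connectedness closes the gap between $0$ and $\delta_\Pi$. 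With that correction your argument coincides with the paper's.
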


\begin{proof}
By \cite[Theorem~1]{DHL} and \cite[Theorem~1.5]{P}, for every wind-tree model $\Pi$, the polynomial diffusion rate in a generic direction is constant and positive, say $\delta_\Pi>0$.
By the work of Crovisier--Hubert--Lanneau~\cite{CHL}, the set of polynomial diffusion rates  $\Delta_\Pi\subset [0,1[$ is connected, and $[\delta,1[{}\subset\Delta_\Pi$. But, by Corollary~\ref{coro:2d}, there are hyperbolic elements in the kernel of the corresponding representation. These give zero polynomial diffusion rates, that is, $0\in\Delta_\Pi$. Finally, since $\Delta_\Pi$ is connected, $[0,1[{}\subset\Delta_\Pi$.
\end{proof}

\subsection{Ergodicity}
Using the ergodicity criterium of Hubert--Weiss~\cite[Theorem~1]{HuW}, which extends naturally to $\Z^d$-covers, it is possible to show that even if the ergodicity is rare, by a result of Fr\c{a}czek--Ulcigrai~\cite[Corollary~1.3]{FU}, in the measure theoretic sense, it is typical in the topological sense, that is, there is a $G_\delta$-dense set of directions for which the billiard flow is ergodic in the wind-tree model.

\begin{theo}
Let $\Pi$ be a wind-tree model (including the Delecroix--Zorich variant) such that the underlying compact translation surface is a Veech surface. Then, there is a $G_\delta$-dense set of directions for which the billiard flow is ergodic in $\Pi$.
\end{theo}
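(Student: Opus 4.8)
The plan is to combine the ergodicity criterion of Hubert--Weiss with the topological genericity result of Fr\k{a}czek--Ulcigrai, using the first-kind property established in Corollary~\ref{coro:2d} as the geometric input. First I would recall that for a wind-tree model whose underlying compact surface is a Veech surface, the infinite surface $X_\mathbf{f}$ is a $\Z^2$-cover and, by Corollary~\ref{coro:2d}, its Veech group is of the first kind; in particular its limit set is the full boundary circle and the fixed points of hyperbolic elements are dense in the set of directions. The Hubert--Weiss criterion (\cite[Theorem~1]{HuW}), which extends naturally to $\Z^d$-covers, gives a sufficient condition on a direction $\theta$ --- roughly, that the straight-line flow in direction $\theta$ on the compact surface is non-divergent or that $\theta$ is suitably approximated by fixed directions of parabolic/hyperbolic elements in the kernel of the homological representation --- guaranteeing that the flow in direction $\theta$ on the infinite cover is ergodic.

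The key structural step is to identify the set $E\subset S^1$ of ergodic directions as containing a countable intersection of open dense sets. I would argue that the set of directions satisfying the Hubert--Weiss criterion contains all directions fixed by hyperbolic elements lying in the kernel $K$ of the relevant representation, together with a neighborhood structure making the union of such ``good'' directions open and dense. Concretely, each hyperbolic element of $K$ contributes a direction (or a pair of directions) that is ergodic; by the first-kind property these accumulate everywhere, giving density. To upgrade density to a $G_\delta$-dense set, I would invoke the measure-theoretic result of Fr\k{a}czek--Ulcigrai (\cite[Corollary~1.3]{FU}), which asserts that ergodicity is \emph{atypical} in the measure sense (the ergodic directions form a measure-zero set); the interplay is that a dense set that is simultaneously measure-zero and obtainable as a countable intersection of open sets is exactly the kind of $G_\delta$-dense set one expects, and the Baire category theorem then guarantees it is topologically generic.

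The main obstacle, and the step requiring the most care, will be verifying that the Hubert--Weiss criterion indeed applies uniformly to the directions produced by Corollary~\ref{coro:2d}, i.e.\ that the hyperbolic (and parabolic) elements in the kernel $K$ genuinely yield ergodicity of the flow on the $\Z^2$-cover rather than merely recurrence or non-divergence. This requires checking that the zero-drift condition built into the definition of $F$ is compatible with the hypotheses of \cite[Theorem~1]{HuW}, and that the natural extension of that criterion from $\Z$-covers to $\Z^2$-covers preserves the conclusion; I would treat this extension as the technical heart of the argument. Once the set of ergodic directions is shown to contain a dense set of such distinguished directions and to be expressible as a $G_\delta$, I would conclude by a standard Baire category argument, combined with \cite[Corollary~1.3]{FU} to reconcile the topological genericity with the measure-theoretic rarity, that the set of ergodic directions is $G_\delta$-dense.
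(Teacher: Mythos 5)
You correctly identify the two external inputs---Corollary~\ref{coro:2d} and the Hubert--Weiss ergodicity criterion---but the proposal misses the concrete hypothesis that actually has to be verified, and it assigns to Fr\c{a}czek--Ulcigrai a role it cannot play. The paper invokes \cite[Corollary~16]{HuW} (extended to $\Z^2$-covers), which says: given that the Veech group is of the first kind, ergodicity holds on a $G_\delta$-dense set of directions \emph{provided} one exhibits strips in the infinite surface whose core curves $\gamma$ (in the compact surface) pair with the defining cocycles $f_1,f_2$ so as to generate a rank-$2$ sublattice of $\Z^2$. The entire content of the verification is that the horizontal and vertical strips of the wind-tree model satisfy this rank condition. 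Your proposal never formulates this condition; instead it asserts that the fixed directions of hyperbolic elements of $K$ are themselves ergodic directions. That is not what the criterion gives: those directions are the ones exploited in the \emph{diffusion rate} theorem, where they yield zero polynomial diffusion. In the Hubert--Weiss mechanism the $G_\delta$-dense set arises from directions well approximated by distinguished directions, and the approximation argument only yields ergodicity of the full $\Z^2$-cover when the lattice condition on the strips holds---without it one cannot rule out that the flow is ergodic only on an intermediate cover.

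The second problem is the intended use of \cite[Corollary~1.3]{FU}. That result says ergodicity is \emph{rare in the measure-theoretic sense}; in the paper it appears only as contextual commentary contrasting measure-theoretic atypicality with topological typicality, and it plays no role in the proof. Your claim that a dense, measure-zero set ``obtainable as a countable intersection of open sets'' is thereby $G_\delta$-dense is circular: measure zero contributes nothing to establishing density or the $G_\delta$ structure, and the Baire category theorem does not upgrade a dense set to a dense $G_\delta$. The $G_\delta$-density is already part of the conclusion of \cite[Corollary~16]{HuW}; no separate Baire argument is needed once the strip condition is checked.
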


\begin{proof}
Since, by Corollary~\ref{coro:2d}, the Veech group is of the first kind, by~\cite[Corollary~16]{HuW}, it is enough to find two strips (in the infinite translation surface) such that the intersection of the corresponding core curves $\gamma$ (in the compact translation surface) with the cocycles $f_1,f_2$ defining the $\Z^2$-cover generates a rank-$2$ lattice in $\Z^2$.
But the horizontal and vertical strips do the job. Thus, there is a $G_\delta$-dense set of directions for which the billiard flow is ergodic in $\Pi$.
\end{proof}

%-------------------------------------------------------------------------
% Bibliography

%-------------------------------------------------------------------------
\end{document}